\newtheorem{theorem}{Theorem}
\newtheorem{lemma}[theorem]{Lemma}
\newtheorem{conjecture}[theorem]{Conjecture}
\newcommand{\qed}{\rule{7pt}{7pt}}
\newenvironment{proof}{\noindent{\bf Proof}\hspace*{1em}}{\qed\bigskip}
\newenvironment{proof-sketch}{\noindent{\bf Sketch of Proof}\hspace*{1em}}{\qed\bigskip}
\newenvironment{proof-idea}{\noindent{\bf Proof Idea}\hspace*{1em}}{\qed\bigskip}
\newenvironment{proof-of-lemma}[1]{\noindent{\bf Proof of Lemma #1}\hspace*{1em}}{\qed\bigskip}
\newenvironment{proof-attempt}{\noindent{\bf Proof Attempt}\hspace*{1em}}{\qed\bigskip}
\title{Wilf conjecture}
\author{Junkyu An}
\date{14 July 2008}
\begin{document}
\maketitle

\begin{abstract}
Let $S(n,k)$ be the Stirling number of the second kind. Wilf conjectured that the alternating sum of $S(n,k)$
for $0\le k\le n$ is not zero for all $n>2$. In this paper, we prove that Wilf conjecture is true except at
most one number with the properties of weighted Motzkin number.
\end{abstract}

\section{Introduction}
Let $S(n,k)$ be the Stirling number of the second kind(i.e. the number of partitioning $[n]$ into $k$ nonempty subsets). For more information, 
see $\cite{EC}$. The complementary Bell numbers are $f(n)=\sum_{k=0}^{n}(-1)^k S(n,k)$. The first $f(n)$(Sloane$\cite{S}$'s $A000587$) 
for $n=0,1,2,3,4,\cdots$ are 
$$1,-1,0,1,1,-2,-9,-9,50,267,413,-2180,-17731,-50533,\cdots$$
The following conjecture comes from Wilf$\cite{EO}$.
\begin{conjecture}$\cite{EO}$
$f(n)\neq 0$ for all $n>2$.
\end{conjecture}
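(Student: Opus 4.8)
The plan is to work with the exponential generating function
$$ g(x) = \sum_{n\ge 0} f(n)\,\frac{x^n}{n!} = e^{1-e^x}, $$
which follows at once from the fact that $(e^x-1)^k/k!$ is the EGF of $S(n,k)$ and from summing the alternating series $\sum_k (-1)^k (e^x-1)^k/k!$. Differentiating gives the separable relation $g'(x) = -e^x g(x)$, and comparing coefficients of $x^n/n!$ yields the finite recurrence
$$ f(n+1) = -\sum_{k=0}^{n}\binom{n}{k} f(k). $$
This recurrence is the engine for everything that follows: it propagates congruence information about $f$ through a binomial convolution without requiring any closed form.

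First I would reduce modulo $2$. Combining the recurrence with Lucas' theorem, which says $\binom{n}{k}$ is odd precisely when the binary digits of $k$ lie under those of $n$, I would show that the sequence $f(n)\bmod 2$ is periodic with period three, realizing the pattern $1,1,0$ on the residues $n\equiv 0,1,2 \pmod 3$. (This matches the listed values: $f(n)$ is even exactly at $n=2,5,8,11,\dots$.) The immediate consequence is that $f(n)$ is odd, hence nonzero, whenever $n\not\equiv 2\pmod 3$, which already disposes of two thirds of all cases. The entire difficulty is therefore concentrated on the single progression $n\equiv 2\pmod 3$.

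Next I would sharpen the analysis on $n\equiv 2\pmod 3$ by climbing to higher $2$-adic information, tracking $f(n)$ modulo $2^{\,j}$ for growing $j$ (and using a $3$-adic refinement where convenient). Each new modulus forces any zero of $f$ to satisfy one more congruence in a nested chain, so the candidate set shrinks to an arithmetic-progression-like structure of vanishing density. To make this quantitative I would seek a continued-fraction (Hankel) expansion of $g$ and read off a weighted Motzkin-path interpretation of the coefficients that control the $2$-adic valuation of $f(n)$; under this dictionary the equation $f(n)=0$ becomes an exact cancellation inside a weighted Motzkin sum, a rigid condition that at most one residual index can meet.

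The main obstacle is exactly this last step. Bounding the $2$-adic valuation of $f(n)$ \emph{uniformly} along $n\equiv 2\pmod 3$ is the genuine hard core of the conjecture: reduction modulo any fixed $2^{\,j}$ only thins the exceptional set without ever emptying it, so a truly new ingredient is needed to collapse the survivors. My expectation is that the combinatorial rigidity of the weighted Motzkin representation supplies that ingredient, but only strongly enough to eliminate all but one possible value of $n$ — which is precisely the strength that the statement claims and, I anticipate, the most this method can deliver.
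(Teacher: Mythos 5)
There is a genuine gap, and it is one you partly acknowledge yourself: your proposal does not prove the stated conjecture, and in fact neither does the paper --- the paper only establishes the weaker Theorem~2, that at most one $n>2$ has $f(n)=0$. Your opening steps are sound: the EGF $e^{1-e^x}$, the recurrence $f(n+1)=-\sum_{k=0}^n\binom{n}{k}f(k)$, and the period-three pattern $1,1,0$ of $f(n)\bmod 2$ are all correct, and they reduce the problem to the progression $n\equiv 2\pmod 3$, exactly as the paper's choice of moduli $3\cdot 2^k$ does. But from that point on your argument is a statement of intent rather than a proof. The claim that ``each new modulus forces any zero of $f$ to satisfy one more congruence in a nested chain'' is precisely the content of the paper's display \eqref{wc3}, and it requires a concrete mechanism that you do not supply. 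The paper's mechanism is the quadratic congruence of Lemma~5, $(E^{2^k}-1)^2(f(n))\equiv 0\pmod{2^{2k+2}}$ with $E=S^{6(2t-1)}$, derived from the mod-$2$ periodicity of the transfer matrices $A_{4k-1}$ of the weighted Motzkin recurrence. That congruence yields the midpoint relation $f(A)-f(C)\equiv 2\bigl(f(B)-f(C)\bigr)\pmod{2^{m+3}}$ for $C=(A+B)/2$, so two zeros modulo a high power of $2$ in the surviving residue classes would force a third zero that contradicts the induction hypothesis; this is what cuts the exceptional set down to the two classes $2$ and $a_k$ modulo $3\cdot 2^k$, and hence to at most one genuine zero beyond $n=2$. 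Your appeal to ``combinatorial rigidity of the weighted Motzkin representation'' gestures at this but contains no argument that exact cancellation can occur at most once.

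Even granting all of that machinery, the conjecture itself remains out of reach: the surviving class $a_k$ never provably disappears. As the paper notes after its proof, one would need to show that the sequence $a_5, a_6, \dots$ is (eventually) increasing, or find some genuinely new input, to conclude $f(n)\neq 0$ for all $n>2$. Your final paragraph concedes that your method ``at most'' eliminates all but one candidate --- which means what you have proposed is, at best, a route to the paper's Theorem~2, not to the conjecture you were asked to prove. To make even that weaker claim rigorous you would need to replace the heuristic middle of your proposal with an explicit $2$-adic lifting argument of the kind sketched above.
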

Wannemacker, Laffey and Osburn$\cite{W}$ showed that $f(n)\neq 0$ for all $n\not\equiv2,\;2944838(mod\;3145728)$ by
using the generating function of $f(n)$. \\
The main result of this paper is the following.
\begin{theorem}
There is at most one $n>2$ such that $f(n)=0$.
\end{theorem}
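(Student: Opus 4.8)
The plan is to work $2$-adically with the complementary Bell numbers. From the exponential generating function $F(x)=e^{1-e^{x}}$ one has $F'(x)=-e^{x}F(x)$, and comparing coefficients yields the recurrence
$$f(n+1)=-\sum_{k=0}^{n}\binom{n}{k}f(k),\qquad f(0)=1.$$
First I would pin down the parity of $f(n)$. Since $(-1)^{k}\equiv 1\pmod 2$, we have $f(n)\equiv\sum_{k}S(n,k)=B(n)\pmod 2$, where $B(n)$ is the ordinary Bell number, and Touchard's congruence $B(n+2)\equiv B(n+1)+B(n)\pmod 2$ produces the purely periodic parity pattern $1,1,0$ of period $3$. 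Hence $f(n)$ is even exactly when $n\equiv 2\pmod 3$, so any zero of $f$ must lie in the class $n\equiv 2\pmod 3$. This recovers the mod-$3$ part of the Wannemacker--Laffey--Osburn restriction and reduces everything to the subsequence $f(3m+2)$.

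The next step is to promote this parity bookkeeping to a genuine $2$-adic valuation estimate, and this is where the weighted Motzkin numbers enter. I would use that the ordinary generating function $\sum_{n}f(n)x^{n}$ admits a Jacobi continued-fraction expansion with explicit integer coefficients; by the combinatorial theory of continued fractions this writes each $f(n)$ as a sum over Motzkin lattice paths of length $n$, where a level step at height $h$, an up-step, and a down-step into height $h$ carry prescribed integer weights inherited from the Stirling recursion $S(n,k)=S(n-1,k-1)+kS(n-1,k)$, with the signs coming from the factor $(-1)^{k}$. Thus $f(n)$ is literally a weighted Motzkin number. The reason for this reformulation is that each weight has a controlled $2$-adic valuation, so $\nu_{2}$ of the whole sum is governed by the path (or paths) of minimal weight-valuation.

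The heart of the argument is then a uniqueness statement: for $n\equiv 2\pmod 3$ I would show that, with at most one exceptional index, there is a single Motzkin path whose weight has strictly smaller $2$-adic valuation than the combined contribution of all remaining paths, so the minimal term cannot cancel and $\nu_{2}(f(n))$ is finite --- forcing $f(n)\neq 0$. Concretely this amounts to tracking $\nu_{2}(f(3m+2))$ as $m$ grows and showing it is pinned to the finite value determined by the dominant path, except at the single transitional $m$ at which two contributions tie in valuation; that tie is precisely the ``weighted Motzkin'' property singled out in the statement, and a recursion on the relevant weights shows at most one $m$ can realise it. Combined with the mod-$3$ reduction, this yields at most one $n>2$ with $f(n)=0$.

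I expect the main obstacle to be exactly the control of $2$-adic cancellation in this last step: proving that the minimal-valuation path is generically unique requires a sharp comparison of weight valuations across the entire path ensemble, and the possibility of a single valuation tie --- which is what prevents the stronger conclusion ``$f(n)\neq 0$ for all $n>2$'' and leaves one admissible exception --- must be isolated and shown to occur at most once. A secondary technical point is verifying the exact integer weights of the J-fraction for the alternating sum $\sum_{k}(-1)^{k}S(n,k)$, which one can establish either from the LU (Hankel) factorization of the moment matrix or directly by pushing the two-term Stirling recurrence through the alternating sum.
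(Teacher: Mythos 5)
Your setup (the mod-$3$ parity reduction via Bell numbers, and the identification of $f(n)$ with a weighted Motzkin number via the J-fraction) matches the paper's starting point, but the heart of your argument --- that for all but one $n\equiv 2\pmod 3$ a \emph{single} Motzkin path has strictly smaller $2$-adic valuation than the combined contribution of all the others --- cannot work. With the weights coming from the alternating Stirling sum ($b'_x=-x-1$, $c'_x=x-1$), the all-level path at height $0$ has weight $(-1)^n$, so the minimal valuation over individual paths is always $0$; yet $f(n)$ is even precisely when $n\equiv 2\pmod 3$ (and $f(2)=0$). So in exactly the residue class you care about there are many odd-weight paths that cancel, and $\nu_2$ of the sum is governed by cancellation among a large ensemble of equal-valuation terms, not by a dominant path. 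The further claim that ``a recursion on the relevant weights shows at most one $m$ can realise'' a valuation tie is asserted without any mechanism, and supplying that mechanism is the entire content of the theorem.

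The paper's actual route is quite different. It uses the Motzkin-path picture only to set up a transfer matrix $A_r$ acting on the vector $(W_{n,0},\dots,W_{n,r-1})$ of partial path sums; reduced mod $2$ this matrix is block-diagonal with a $4\times 4$ block $A$ satisfying $A^6\equiv I\pmod 2$, which yields the congruences $(E-1)^r(W_{n,k})\equiv 0\pmod{2^r}$ for $E=S^{6(2t-1)}$ and hence $(E^{2^{k}}-1)^2 f(n)\equiv 0\pmod{2^{2k+2}}$. The key consequence is a midpoint identity $f(A)-f(C)\equiv 2\bigl(f(B)-f(C)\bigr)$ modulo a high power of $2$, where $C=(A+B)/2$: if two residue classes mod $3\cdot 2^{m+1}$ lying over a single class mod $3\cdot 2^m$ both contained zeros of $f$ modulo $2^{m+3}$, the midpoint class would contain one modulo $2^{m+2}$, contradicting the induction hypothesis. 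This confines all zeros of $f$ modulo $2^{k+2}$ to exactly two residue classes mod $3\cdot 2^k$ (namely $n\equiv 2$ and $n\equiv a_k$), one of which is already occupied by $n=2$; three distinct zeros $2,x,y$ would eventually have to occupy three distinct classes, which is the contradiction. If you want to salvage a path-based approach you would need some analogue of this second-difference congruence; nothing in your proposal produces it.
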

In section $2.1$, we define weighted Motzkin numbers. Section $2.2$ deals some properties of $f(n)$ by using the
properties of weighted Motzkin numbers. In section $3$, we finally prove $Theorem\;2$.

\section{Weighted Motzkin number}
\subsection{Definition}
A Motzkin path $P$ with length $n$ is a path from $(0,0)$ to $(n,0)$ consisting of steps $(1,1)$(a rise step), $(1,-1)$(a fall step) and 
$(1,0)$(a level step) that lies above the $x$-axis. It can be expressed by $p_0,p_1,\cdots,p_{n}$, a sequence of points in 
$(N\cup\{0\}) \times (N\cup\{0\})$ where \\
$(1)$ $p_0=(0,0)$, $p_{n}=(n,0)$ \\
$(2)$ $p_{i+1}-p_{i}=(1,1)$, $(1,-1)$, or $(1,0)$ \\
Let $b_x$(respectively, $c_x$ and $d_x$) be the given weight function from $N\cup\{0\}$ to $Z$. The weight of a rise step from $(x,y)$ to $(x+1,y+1)$ 
is $b_y$(respectively, the weight of a fall step from $(x,y+1)$ to $(x+1,y)$ is $d_y$ and the weight of a level step from $(x,y)$ to $(x+1,y)$ is $c_y$). 
Then, the weight of a Motzkin path $P$
(i.e. $w(P)$) is defined by the product of the weight of steps. See Figure~\ref{white} for an example. \\
\begin{figure}[!hbp]
\setlength{\unitlength}{4cm}
\begin{picture}(1,1)
\thicklines
\put(0,0){\line(0,1){1}}
\put(0,0){\line(1,0){3}}
\put(0,0){\line(1,1){0.25}}
\put(0.25,0.25){\line(1,0){0.25}}
\put(0.5,0.25){\line(1,1){0.25}}
\put(0.75,0.5){\line(1,1){0.25}}
\put(1,0.75){\line(1,0){0.25}}
\put(1.25,0.75){\line(1,-1){0.25}}
\put(1.5,0.5){\line(1,0){0.25}}
\put(1.75,0.5){\line(1,0){0.25}}
\put(2.00,0.5){\line(1,-1){0.25}}
\put(2.25,0.25){\line(1,0){0.25}}
\put(2.5,0.25){\line(1,-1){0.25}}
\put(0.06,0.15){$b_0$}
\put(0.35,0.27){$c_1$}
\put(0.56,0.40){$b_1$}
\put(0.81,0.65){$b_2$}
\put(1.1,0.77){$c_3$}
\put(1.35,0.66){$d_2$}
\put(1.6,0.52){$c_2$}
\put(1.85,0.52){$c_2$}
\put(2.1,0.41){$d_1$}
\put(2.35,0.27){$c_1$}
\put(2.60,0.16){$d_0$}
\end{picture}
\caption{A path with weight $b_0 b_1 b_2 c_1^2 c_2^2 c_3 d_0 d_1 d_2$. \label{white}}
\end{figure}
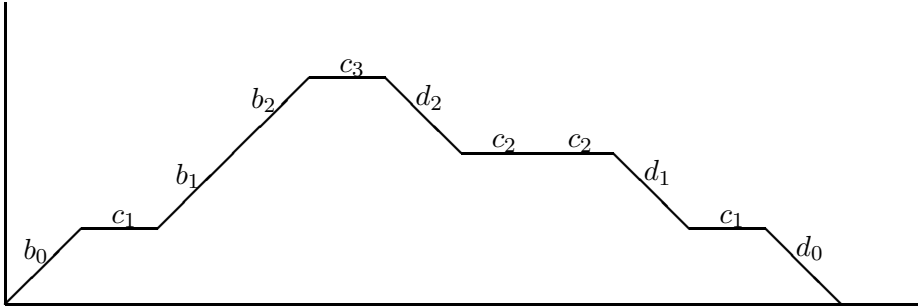
\\
The corresponding $n$th weighted Motzkin number $M_n^{b,c,d}$ is given by
\begin{equation}
M_n^{b,c,d}=\sum w(P)
\end{equation}
where the sum is over all Motzkin paths from $(0,0)$ to $(n,0)$. \\
From $\cite[Chapter\;5]{GJ}$, we know that the generating function of $M_n^{b,c,d}$ is
\begin{equation}
\sum_{n\ge 0} M_n^{b,c,d} x^n =\frac{1}{1-c_0 x-\frac{b_0 d_0 x^2}{1-c_1 x-\frac{b_1 d_1 x^2}{1-c_2 x-\frac{b_2 d_2 x^2}{\cdots}}}} \label{cf1}
\end{equation}
Since $M_n^{b,c,d}=M_n^{bd,c,1}$, we do not concern about $d_x$ in most case(assume that $d_x=1$) unless it is mentioned.
For $b_x=1$ and $c_x=1$, $M_n^{b,c}$ is the $n$th Motzkin number.
\begin{theorem}$\cite[Theorem\;2]{F}$
$$\sum_{k=0}^{n} S(n,k)u^k=M_n^{b',c'}$$
where $b'_x=u(x+1)$ and $c'_x=u+x$.
\end{theorem}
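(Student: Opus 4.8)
The plan is to read the identity combinatorially. Writing $T_n(u):=\sum_{k=0}^{n}S(n,k)u^k$, the fact that $S(n,k)$ counts partitions of $[n]$ into $k$ nonempty blocks means $T_n(u)=\sum_{\pi}u^{\mathrm{bl}(\pi)}$, summed over all set partitions $\pi$ of $[n]$, where $\mathrm{bl}(\pi)$ is the number of blocks. So the goal is to show $\sum_{\pi}u^{\mathrm{bl}(\pi)}=\sum_P w(P)$, the latter ranging over Motzkin paths of length $n$ weighted by $b'_y=u(y+1)$ and $c'_y=u+y$ (with $d_y=1$). I would first put each partition in its standard arc form: inside a block $\{a_1<\cdots<a_m\}$ draw the arcs $(a_1,a_2),\dots,(a_{m-1},a_m)$, and classify each $i\in[n]$ as an \emph{opener} (a left endpoint only), a \emph{closer} (a right endpoint only), a \emph{continuer} (both), or a \emph{singleton} (neither).

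Next I would turn this classification into a lattice path by reading $1,\dots,n$ in order and sending an opener to a rise, a closer to a fall, and a continuer or singleton to a level step. The resulting path stays above the axis and returns to $0$, since openers and closers are equinumerous (each block of size $\ge 2$ has one of each) and no prefix can close more arcs than it has opened; hence it is a genuine Motzkin path, and at every position its height equals the number of arcs currently open.

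The crux is the weight count, which I would obtain by reconstructing a partition from a path and tracking the number of choices. At a rise the opener merely starts a fresh block, contributing one factor $u$ and no choice, so the natural rise weight is $b_y=u$. At a fall landing at height $y$ there are $y+1$ arcs open just before it, and the closer must be attached to one of them, giving $d_y=y+1$. At a level step of height $y$ the element is either a new singleton block (factor $u$) or a continuer attached to one of the $y$ open arcs (factor $y$), giving $c_y=u+y$. Since every block contributes exactly one factor $u$ — at its opener if it has size $\ge 2$, at its unique level step if it is a singleton — the accumulated $u$-power is precisely $\mathrm{bl}(\pi)$, so $\sum_{\pi\mapsto P}u^{\mathrm{bl}(\pi)}=w(P)$ for the weights $(b,c,d)=(u,\,u+y,\,y+1)$. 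Invoking the stated reduction $M_n^{b,c,d}=M_n^{bd,c,1}$ then folds $d_y$ into the rise weight to produce exactly $b'_y=u(y+1)$ and $c'_y=u+y$, which finishes the identity.

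The main obstacle I anticipate is making this bookkeeping airtight: I must verify that the correspondence is a true bijection onto weighted path-with-choices data, i.e. that each independent system of closer/continuer choices along a fixed Motzkin path reconstructs a unique valid partition, and that the height genuinely equals the number of open arcs at each step so that the multiplicities $y$ and $y+1$ are correct. As an analytic alternative that bypasses the bijection, I could note that $T_n(u)$ is the $n$-th moment of the Poisson law of parameter $u$, whose monic orthogonal polynomials are the Charlier polynomials with three-term recurrence coefficients $u+n$ and $nu$; substituting these into the Jacobi continued fraction \eqref{cf1} immediately yields $c'_y=u+y$ and $b'_y d'_y=u(y+1)$. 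I expect the combinatorial argument to be the more self-contained of the two.
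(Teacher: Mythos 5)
Your argument is correct, and it is essentially the proof the paper points to: the paper does not prove this theorem itself but cites Flajolet's path-diagram bijection (a variant of the Fran\c{c}on--Viennot decomposition), which is exactly the opener/closer/continuer/singleton encoding you construct, with the choice-counting giving $b_y=u$, $c_y=u+y$, $d_y=y+1$ and the reduction $M_n^{b,c,d}=M_n^{bd,c,1}$ folding $d$ into $b$. Your weight assignments are consistent with the paper's step-weight conventions, so nothing further is needed.
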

Flajolet$\cite{F}$ proved the above result by using Path diagrammes. He constructed a bijection between set partitions and weighted
Motzkin paths and he made a generalization of Francon-Viennot decomposition in $\cite{GJ}$. We remark that a lot of combinatorial counting
including $B_n$(Bell numbers), $I_n$(the number of involutions on $[n]$) can be expressed by weighted
Motzkin numbers with his result, too. In particular, we have
\begin{equation}
f(n)=M_n^{b',c'}
\end{equation}
for $b'_x=-x-1$ and $c'_x=x-1$.
From \eqref{cf1},
\begin{equation}
f(n)=M_n^{b,c,d}
\end{equation}
for
\begin{equation}
\left\{
\begin{array}{cccc}
b_x=(-x-1)/2  &, c_x=x-1 & \text{and } d_x=2 & \text{if $x$ is odd}  \\
b_x=-x-1      &, c_x=x-1 & \text{and } d_x=1 & \text{if $x$ is even}
\end{array} \right.
\end{equation}

\subsection{Some properties of $f(n)$}
Let $W_{n,k}$ be the sum of weighted paths from $(0,0)$ to $(n,k)$ that lies above the $x$-axis. By definition,
$W_{n,0}=f(n)$. We have
\begin{equation}
W_{n+1,k+1}=W_{n,k}\cdot b_k + W_{n,k+1}\cdot c_{k+1} + W_{n,k+2}\cdot d_{k+1} \label{r}
\end{equation}
for all $n,k\ge 0$ and $W_{n+1,0}=W_{n,0}\cdot c_{0} + W_{n,1}\cdot d_{0}$. \\
Let $A_r$(for $r\ge 0$) be the following $(r+1)\times (r+1)$ matrices:
\begin{equation}
\mathbf{A_r} = \left(
\begin{array}{ccccc}
c_0    & d_0    & 0      & \ldots & \ldots \\
b_0    & c_1    & d_1    & \ldots & \ldots \\
\vdots & \vdots & \ddots & \ldots & \ldots \\
\vdots & \vdots & b_{r-2}& c_{r-1}& d_{r-1}\\
\vdots & \vdots & 0      & b_{r-1}& c_{r}
\end{array} \right) \label{m}
\end{equation}
Then, $(W_{n+k,0},W_{n+k,1},\cdots,W_{n+k,r-1})\equiv A_{r-1}^k (W_{n,0},W_{n,1},\cdots W_{n,r-1})$ \\
$(mod\; b_0b_1\cdots b_{r-1})$ because $W_{n,k}\equiv 0(mod\;b_0b_1\cdots b_{r-1})$ for all $k\ge r$. \\
\begin{equation}
\mathbf{A_{4k-1}} \equiv \left(
\begin{array}{ccccc}
A          & \textbf{0} & \ldots     & \ldots     & \ldots     \\
\textbf{0} & A          & \ldots     & \ldots     & \ldots     \\
\vdots     & \vdots     & \ddots     & \ldots     & \ldots     \\
\vdots     & \vdots     & \vdots     & A          & \textbf{0} \\
\vdots     & \vdots     & \vdots     & \textbf{0} & A
\end{array} \right) (mod\; 2)
\end{equation}
where
\begin{equation}
\mathbf{A} = \left(
\begin{array}{ccccc}
1      & 1      & 0     & 0   \\
1      & 0      & 0     & 0   \\
0      & 1      & 1     & 1   \\
0      & 0      & 1     & 0
\end{array} \right)
\end{equation}
Since $A^{6}\equiv I(mod\; 2)$, $A_{4k-1}^{6}\equiv I(mod\; 2)$ where $I$ is an identity matrix. \\
Now, let $S$ be the shift operator(i.e. $S(W_{n,k})=W_{n+1,k}$ and $S(f(n))=f(n+1)$).
$(\sum_{i=0}^{t} a_i S^{i})(W_{n,k})$
means $\sum_{i=0}^{t} a_i S^{i}(W_{n,k})=\sum_{i=0}^{t} a_i W_{n+i,k}$ where $a_i\in Z$ for $1\le i\le t$.
\begin{lemma}
$$(E-1)^r (W_{n,k}) \equiv 0(mod\;2^{r})$$
for all $r\ge 1$ where $E=S^{6(2t-1)}$($t\in N$).
\end{lemma}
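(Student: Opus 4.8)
The plan is to convert the shift operator $E$ into a power of the transition matrix $A_{R-1}$ and then read off the factor of $2$ from the congruence $A_{4k-1}^{6}\equiv I\pmod 2$ recorded above. Fix $r\ge 1$ and indices $n,k$, and abbreviate $\beta_R:=b_0b_1\cdots b_{R-1}$. First I would note that the truncated vector $\mathbf{W}^{(R)}_{m}=(W_{m,0},\dots,W_{m,R-1})$ evolves, modulo $\beta_R$, by left multiplication by $A_{R-1}$: in the recurrence \eqref{r} the only contribution that escapes the truncation is the superdiagonal term $d_{R-1}W_{m,R}$, and this vanishes modulo $\beta_R$ because $W_{m,j}\equiv 0\pmod{\beta_R}$ for every $j\ge R$. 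Iterating, $\mathbf{W}^{(R)}_{m+s}\equiv A_{R-1}^{s}\mathbf{W}^{(R)}_{m}\pmod{\beta_R}$ for all $s\ge 0$.

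Next I would expand $(E-1)^r$ as an alternating sum of shifts. Since $E=S^{6(2t-1)}$ sends $W_{m,k}$ to $W_{m+6(2t-1),k}$, the evolution law gives, for every $k<R$,
\[
(E-1)^r(W_{n,k})\;\equiv\;\Big(\big(A_{R-1}^{6(2t-1)}-I\big)^{r}\mathbf{W}^{(R)}_{n}\Big)_{k}\pmod{\beta_R}.
\]
I would then take $R$ to be a multiple of $4$, so that the block reduction gives $A_{R-1}\equiv\mathrm{diag}(A,\dots,A)\pmod 2$ and hence $A_{R-1}^{6(2t-1)}=\big(A_{R-1}^{6}\big)^{2t-1}\equiv I\pmod 2$. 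Thus $A_{R-1}^{6(2t-1)}-I=2C$ for an integer matrix $C$, and the right-hand side above equals $2^{r}\big(C^{r}\mathbf{W}^{(R)}_{n}\big)_{k}$, which is visibly a multiple of $2^{r}$.

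The step requiring the most care --- and the main obstacle --- is that this congruence is modulo $\beta_R$, not modulo $2^{r}$. To bridge the gap I would enlarge $R$ (keeping it a multiple of $4$ and demanding $R>k$) until $2^{r}\mid\beta_R$; this is possible because every $b_x$ with $x\equiv 3\pmod 4$ is even, so the power of $2$ dividing $\beta_R$ tends to infinity with $R$. For such an $R$ we may write $(E-1)^r(W_{n,k})=2^{r}\big(C^{r}\mathbf{W}^{(R)}_{n}\big)_{k}+\beta_R M$ for some integer $M$; both terms are divisible by $2^{r}$, the second because $2^{r}\mid\beta_R$, and therefore $(E-1)^r(W_{n,k})\equiv 0\pmod{2^{r}}$. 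In short, the recurrence naturally lives modulo $\beta_R$ while the target modulus is $2^{r}$, and the argument closes only because the $2$-adic valuation of $\beta_R$ is unbounded.
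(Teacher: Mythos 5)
Your argument is correct. It rests on the same two pillars as the paper's proof --- the evolution $\mathbf{W}^{(R)}_{m+s}\equiv A_{R-1}^{s}\mathbf{W}^{(R)}_{m}\pmod{b_0\cdots b_{R-1}}$ and the congruence $A_{4k-1}^{6}\equiv I\pmod 2$ --- but it organizes them differently. The paper proves the lemma by induction on $r$: at the step from $m$ to $m+1$ it divides $(E-1)^m W_{n,k}$ by $2^m$, observes that the resulting integer vector still evolves by $A_{4m+3}$ modulo $2$, and applies $A_{4m+3}^{6(2t-1)}\equiv I\pmod 2$ once more to gain a single additional factor of $2$; the truncation level $4m+3$ grows with the induction precisely so that $b_0\cdots b_{4m+3}$ absorbs the modulus $2^{m+1}$. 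You instead collapse the whole induction into the identity $\bigl(A_{R-1}^{6(2t-1)}-I\bigr)^{r}=(2C)^{r}=2^{r}C^{r}$, extracting all $r$ factors of $2$ at once, and handle the modulus mismatch by choosing a single truncation $R$ (a multiple of $4$, exceeding $k$ and at least $4r$) with $2^{r}\mid b_0\cdots b_{R-1}$, using the observation that $b_x$ is even whenever $x\equiv 3\pmod 4$. Your version is arguably cleaner and makes explicit the point the paper leaves implicit (why the choice $4r-1$ suffices); what it gives up is the intermediate statement, used nowhere else, that the normalized vectors $(E-1)^m W_{n,k}/2^m$ themselves satisfy the mod-$2$ transfer recurrence. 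Both proofs also silently use, as you do, that $W_{n,j}$ is divisible by $b_0\cdots b_{R-1}$ for $j\ge R$ because any path reaching height $j$ must take a rise step at each of the heights $0,\dots,R-1$; it would be worth stating that justification explicitly, but it is not a gap.
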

\begin{proof}
For given $r$, $(E-1)^r (W_{n,k}) \equiv 0(mod\;2^{r})$ for all $k> 4r-1$ because $W_{n,k}\equiv 0(mod\;2^r)$
for all $k> 4r-1$. Therefore, we only need to show for $k\le 4r-1$ and we can assume that $S=A_{4r-1}$. \\
It is proved by mathematical induction on $r$. The statement is true for $r=1$ from the fact that
$A_{3}^{6}\equiv I(mod\; 2)$. We assume that the statement is true for $r=m$$(m\ge1)$, and prove for $r=m+1$.
Using the assumption $r=m$ (i.e. $(E-1)^m (W_{n,k}) \equiv 0(mod\; 2^m)$), it is easy to check that
\begin{eqnarray*}
& & (\frac{(E-1)^m W_{n+s,0}}{2^m},\cdots,\frac{(E-1)^m W_{n+s,4m+3}}{2^m}) \\
& & \equiv {A}_{4m+3}^s (\frac{(E-1)^m W_{n,0}}{2^m},\cdots,\frac{(E-1)^m W_{n,4m+3}}{2^m})(mod\;2) \label{e2}
\end{eqnarray*}
because
$$(W_{n+s,0},\cdots,W_{n+s,4m+3})\equiv {A}_{4m+3}^s (W_{n,0},\cdots,W_{n,4m+3})(mod\;2^{m+1}) $$
We have $\frac{(E-1)^m E(W_{n,k})}{2^m}\equiv \frac{(E-1)^m (W_{n,k})}{2^m} (mod\;2)$. Therefore,
$(E-1)^{m+1} (W_{n,k}) \equiv 0(mod\;2^{m+1})$ for all $k\le 4m+3$. The proof is done.
\end{proof} \\
We remark that if $g(x)\in Z[x]$ and $(x-1)^r$ divides $g(x)$, then $g(E)\equiv 0(mod\; 2^r)$ for any nonnegative
integer $r$. Therefore, we get the following lemma.
\begin{lemma}
$$(E^{2^{k}}-1)^2(f(n))\equiv 0(mod\;2^{2k+2})$$
for all $k\ge 0$.
\end{lemma}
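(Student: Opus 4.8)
The plan is to expand the operator $(E^{2^k}-1)^2$ in powers of $E-1$ and to track, term by term, the $2$-adic valuation guaranteed by the previous lemma. The crucial point is that the remark preceding this statement, applied directly, is far too weak: over $\mathbb{Z}[x]$ the factor $(x-1)$ divides $(x^{2^k}-1)^2$ only to the second power (the complementary factor of $x^{2^k}-1$ equals $2^k\neq 0$ at $x=1$), so that remark alone yields merely $(E^{2^k}-1)^2(f(n))\equiv 0\pmod{2^2}$. The missing powers of $2$ must instead be harvested from the even binomial coefficients produced by the substitution $E=(E-1)+1$.

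First I would record the reformulation of the remark that I actually need. If $g(x)=\sum_j c_j(x-1)^j\in\mathbb{Z}[x]$, then since $(E-1)^j(W_{n,k})\equiv 0\pmod{2^j}$ by Lemma 4 and each $c_j$ contributes a factor $2^{v_2(c_j)}$, every summand $c_j(E-1)^j(W_{n,k})$ is divisible by $2^{v_2(c_j)+j}$, whence
\[
g(E)(W_{n,k})\equiv 0\pmod{2^{\lambda(g)}},\qquad \lambda(g):=\min_{c_j\neq 0}\bigl(v_2(c_j)+j\bigr),
\]
where $v_2$ is the $2$-adic valuation. A one-line computation with the Cauchy product shows that $\lambda$ is \emph{super-multiplicative}, i.e. $\lambda(g_1g_2)\ge\lambda(g_1)+\lambda(g_2)$, since the $(x-1)^n$-coefficient of a product is a sum of terms $a_ib_j$ with $i+j=n$ and $v_2$ satisfies the ultrametric inequality.

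Next I would compute $\lambda(x^{2^k}-1)$. Writing $x^{2^k}=\bigl((x-1)+1\bigr)^{2^k}$ gives $x^{2^k}-1=\sum_{j=1}^{2^k}\binom{2^k}{j}(x-1)^j$, and by Kummer's (equivalently Legendre's) theorem $v_2\binom{2^k}{j}=k-v_2(j)$ for $1\le j\le 2^k$. Hence the exponent attached to the $j$-th term is $v_2\binom{2^k}{j}+j=k-v_2(j)+j$. Writing $j=2^a m$ with $m$ odd, this equals $k+(2^a m-a)$, and the elementary inequality $2^a\ge a+1$ for $a\ge 0$ forces $2^a m-a\ge 1$, with equality at $j=1,2$. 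Therefore $\lambda(x^{2^k}-1)=k+1$.

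Finally, super-multiplicativity gives $\lambda\bigl((x^{2^k}-1)^2\bigr)\ge 2(k+1)=2k+2$, so the reformulated remark applied to $W_{n,0}=f(n)$ yields $(E^{2^k}-1)^2(f(n))\equiv 0\pmod{2^{2k+2}}$, as claimed. I expect the only real obstacle to be the valuation bookkeeping: one must resist collapsing to the integer factorization of $(x^{2^k}-1)^2$ and instead keep the full $(x-1)$-adic expansion so that the even binomial coefficients $\binom{2^k}{j}$ remain visible; the inequality $2^a\ge a+1$ is precisely what makes the two competing sources of $2$'s — the valuation $k-v_2(j)$ of the coefficient and the exponent $j$ of $(E-1)^j$ — combine to the required bound.
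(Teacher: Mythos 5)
Your proof is correct, and your opening observation is exactly the point the paper leaves implicit: the remark as literally stated (divisibility by $(x-1)^r$ alone) would indeed yield only $2^2$ here, since $1$ is a root of $(x^{2^k}-1)^2$ of multiplicity exactly $2$, so some accounting of the powers of $2$ hidden in the coefficients of the $(x-1)$-adic expansion is unavoidable. The paper obtains the bound $\lambda(x^{2^k}-1)\ge k+1$ by a different decomposition: it factors $E^{2^k}-1=(E-1)(E+1)(E^2+1)\cdots(E^{2^{k-1}}+1)$ and notes that each factor satisfies $E^{2^s}+1=(E-1)g_s(E)+2$ with $g_s(E)=E^{2^s-1}+\cdots+E+1$, so each contributes, in your language, $\lambda\ge 1$; multiplying the $k$ such factors by the leading $(E-1)$ and then squaring gives $2k+2$ by the same super-multiplicativity you prove (the paper compresses all of this into ``it is obvious from Lemma 4''). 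Your route replaces this telescoping product by the additive expansion $x^{2^k}-1=\sum_{j\ge 1}\binom{2^k}{j}(x-1)^j$ together with $v_2\binom{2^k}{j}=k-v_2(j)$ and the inequality $2^a\ge a+1$; it needs one extra number-theoretic input (Kummer/Legendre), but in exchange it computes $\lambda(x^{2^k}-1)$ exactly and makes the valuation bookkeeping fully explicit through the super-multiplicative functional $\lambda$. Both arguments rest on the same key lemma (Lemma 4) and the same underlying principle, so the difference is one of packaging --- a multiplicative versus an additive decomposition of $x^{2^k}-1$ --- rather than of substance.
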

\begin{proof}
It is true when $k=0$. Since $E^{2^{k+1}}-1=(E-1)(E+1)(E^2+1)\cdots(E^{2^{k}}+1)$ and $E^{2^{s}}+1=(E-1)g_s(E)+2$ for all $s\ge 0$ where 
$g_s(E)=E^{2^{s}-1}+\cdots+E+1$, it is obvious from $Lemma\;4$.
\end{proof} \\
Similar to $Lemma\; 5$, it can be proved that $(E^{2^{k}}-1)(f(n))\equiv 0(mod\;2^{k+1})$. This implies that
\begin{equation}
f(n+3\cdot 2^{k+1})\equiv f(n)(mod\;2^{k+1})\;for\; all\; n \label{wc4}
\end{equation}

\section{The main result}

Now, we will prove $Theorem \;2$. \\
\begin{proof}
From $Lemma\;5$, we have
\begin{equation}
(E^{2^{k+1}}-1)(f(n)) \equiv 2(E^{2^{k}}-1)(f(n))(mod\;2^{2k+2}) \label{wc1}
\end{equation}
for all $k\ge 0$. This implies that $f(n+3\cdot 2^{k+2}\cdot (2t-1))-f(n)\equiv 2(f(n+3\cdot 2^{k+1}
\cdot (2t-1))-f(n))(mod\;2^{2k+2}))$ for $k\ge 0$ and $t\ge 1$.\\
Now, let's show that
\begin{equation}
f(n)\not\equiv 0(mod\;2^{k+2})\;for\; all\; n\not\equiv 2, a_k(mod\;3\cdot 2^k) \label{wc3}
\end{equation}
for $k\ge 5$ and some $2<a_k<3\cdot 2^k$ such that $a_k\equiv 38 (mod\;3\cdot 2^5)$. \\
It is proved by mathematical induction on $k$. Wannemacker, Laffey and Osburn$\cite{W}$ showed that $f(n)\not\equiv 0(mod\;2^{7})$
for all $n\not\equiv 2,38(mod\;3\cdot 2^5)$. Therefore, the statement is true for $k=5$ and $a_5=38$.
We assume that the
statement is true for $r=m$$(m\ge 5)$, and prove for $r=m+1$. Using the assumption $r=m$, we have
$f(n)\not\equiv 0(mod\;2^{m+3})$ for all $n\not\equiv 2,2+3\cdot 2^m,a_m,a_m+3\cdot 2^m(mod\;3\cdot 2^{m+1})$. \\
If there exist some $a$ and $b$ such that $f(2+3\cdot 2^m+3\cdot 2^{m+1} a)\equiv f(2+3\cdot 2^{m+1}b)\equiv 0(mod\;2^{m+3})$,
let
\begin{eqnarray*}
A & = & 2+3\cdot 2^m+3\cdot 2^{m+1} a \\
B & = & 2+3\cdot 2^{m+1}b \\
C & = & \frac{A+B}{2} = 2+3\cdot 2^{m-1}+3\cdot 2^m(a+b)
\end{eqnarray*}
(if $a<b$, change $a$ into $a+4b$ by using \eqref{wc4})
But we know that
\begin{equation}
f(A)-f(C)\equiv 2(f(B)-f(C))(mod\;2^{m+3}) \label{wc2}
\end{equation}
from \eqref{wc1} by taking $n=2+3\cdot 2^{m+1}b$, $t=a-b+1$ and $k=m-2$(i.e. $n=2+3\cdot 2^{m+1}b$,
$n+3\cdot 2^{k+3}\cdot (2t-1)=2+3\cdot 2^m+3\cdot 2^{m+1} a$, $n+3\cdot 2^{k+2}\cdot (2t-1)=
2+3\cdot 2^{m-1}+3\cdot 2^m(a+b)$ and $2k+2=2m-2\ge m+3$ for $m\ge5$). Therefore, $f(C)\equiv 0(mod\;2^{m+2})$ and
this contradicts the assumption $r=m$. But, we know that $f(2)=0$ and therefore, $f(n)\not\equiv 0(mod\;2^{m+3})$
for all $n\equiv 2+3\cdot 2^m(mod\;3\cdot 2^{m+1})$. Similarly, we have $f(n)\not\equiv 0(mod\;2^{m+3})$ for
all $n\equiv a_m+3\cdot 2^m(mod\;3\cdot 2^{m+1})$ or $n\equiv a_m(mod\;3\cdot 2^{m+1})$. The proof of
\eqref{wc3} is done and we can see that $a_{m+1}=a_m$ or $a_m+3\cdot 2^m$. \\
If we assume that there exist x and y such that $f(x)=f(y)=0$ and $x\neq y>2$,
then we can find some $k$ such that $x,y<3\cdot 2^k$. Therefore, $f(2)=f(x)=f(y)\equiv 0(mod\;2^{k+2})$ and this
contradicts \eqref{wc3}. This is because $2,x,y$ are different modulo $3\cdot 2^k$.
\end{proof} \\
Note that Wilf conjecture is true if we can show that $a_i$'s are increasing. In that case, if there exists
$n>2$ such that $f(n)=0$, then we can find some $k$ such that $a_k>n$. Then, $n\not\equiv a_k(mod\;3\cdot 2^k)$ and
Wilf conjecture is true. The following table shows $a_i$ for $5\le i\le 20$. \\
\begin{tabular}{|r|l|}
\hline
$a_5$  & 38 \\
$a_6$  & $134=a_5+3\cdot 2^5$ \\
$a_7$  & $326=a_6+3\cdot 2^6$ \\
$a_8$  & 326 \\
$a_9$  & 326 \\
$a_{10}$ & $1862=a_9+3\cdot 2^9$ \\
$a_{11}$ & 1862 \\
$a_{12}$ & $8006=a_{11}+3\cdot 2^{11}$ \\
$a_{13}$ & $20294=a_{12}+3\cdot 2^{12}$ \\
$a_{14}$ & $44870=a_{13}+3\cdot 2^{13}$ \\
$a_{15}$ & $94022=a_{14}+3\cdot 2^{14}$ \\
$a_{16}$ & $192326=a_{15}+3\cdot 2^{15}$ \\
$a_{17}$ & 192326 \\
$a_{18}$ & $585542=a_{17}+3\cdot 2^{17}$ \\
$a_{19}$ & $1371974=a_{18}+3\cdot 2^{18}$ \\
$a_{20}$ & $2944838=a_{19}+3\cdot 2^{19}$ \\
\hline
\end{tabular} \\
\\
\textbf{Acknowledgments} I would like to thank Alexander Postnikov for valuable comments, discussions and
help to improve the paper.


\begin{thebibliography}
\small

\bibitem[1]{F}
P. Flajolet, {\em Combinatorial aspects of continued fractions}, Discrete Math. 32(1980), 125-161.

\bibitem[2]{GJ}
I. P. Goulden and D. M. Jackson, {\em Combinatorial Enumeration}, Wiley, New York, 1983.

\bibitem[3]{EO}
M. Klazar, {\em Counting even and odd partitions}, Amer. Math. Monthly 110(2003), 527-532.

\bibitem[4]{S}
N. J. A. Sloane, {\em The on-line encyclopedia of integer sequences}, avaiable at
http://www.research.att.com/$\sim$njas/sequences/Seis.html.

\bibitem[5]{EC}
R. Stanley, {\em Enumerative Combinatorics}, Vol. 1, Cambridge University Press, Cambridge, 2006.

\bibitem[6]{W}
S. D. Wannemacker, T. Laffey and R. Osburn, {\em On a conjecture of Wilf}, J. Combin. Theory Ser. A 114(2007), 1332-1349, arXiv:math/0608085.

\end{thebibliography}
\end{document}